\begin{document}

\newcommand{\bE}{\ensuremath{\mathbf{E}}}
\newtheorem{theorem}{Theorem}[section]
\newtheorem{proposition}[theorem]{Proposition}
\newtheorem{lemma}[theorem]{Lemma}
\newtheorem{corollary}[theorem]{Corollary}
\newtheorem{Definition}[theorem]{Definition}

\title{Comparison of two convergence criteria for the variable-assignment Lopsided Lov\'{a}sz Local Lemma}
\author{David G. Harris\thanks{Department of Computer Science, University of Maryland, 
College Park, MD 20742. 
Research supported in part by NSF Awards CNS 1010789 and CCF 1422569.
Email: \texttt{davidgharris29@gmail.com}}}

\date{}

\maketitle

\begin{abstract}
The Lopsided Lov\'{a}sz Local Lemma (LLLL) is a cornerstone probabilistic tool for showing that it is possible to avoid a collection of ``bad'' events as long as their probabilities and interdependencies are sufficiently small. The strongest possible criterion  in these terms is due to Shearer (1985), although it is technically difficult to apply to constructions in combinatorics.

The original formulation of the LLLL was non-constructive; a seminal algorithm of Moser \& Tardos (2010) gave an efficient  algorithm for nearly all its applications, including to $k$-SAT instances where each variable appears in a bounded number of clauses. Harris (2015) later gave an alternate criterion for this algorithm to converge; unlike the LLL criterion or its variants, this criterion depends in a fundamental way on the decomposition of bad-events into variables.

In this note, we show that the criterion given by Harris can be stronger in some cases even than Shearer's criterion. We construct $k$-SAT formulas with bounded variable occurrence, and show that the criterion of Harris is satisfied while the criterion of Shearer is violated. In fact, there is an exponentially growing gap between the bounds provable from any form of the LLLL and from the bound shown by Harris.
\end{abstract}

\maketitle

\section{Introduction}
The Lov\'{a}sz Local Lemma (LLL) is a general probabilistic principle for showing that, in a probability space $\Omega$ with a finite set $\mathcal B$ of ``bad'' events which are not too interdependent and are not too likely, then there is a positive probability no events in $\mathcal B$ occur. Since its introduction in \cite{lll-orig}, it has become a cornerstone of the probabilistic method of combinatorics.

There have been numerous extensions of the LLL since its original formulation. One important generalization known as the \emph{Lopsided Lov\'{a}sz Local Lemma} (LLLL) \cite{erdos-spencer} observes that it is not necessary for bad-events to be fully independent. If the bad-events are \emph{positively correlated} in a certain sense, then for the purposes of the LLL this is just as good as independence. This type of correlation, which we discuss shortly, is known as \emph{lopsidependency}.

In order to explain the LLL formally, we need to introduce a number of definitions.

For any collection of events $S \subseteq \mathcal B$, we define  $\overline{S} = \bigcap_{B \in S} \overline B$; we refer to this event as \emph{avoiding $S$.}  A \emph{dependency graph} is a graph $G$ on vertex set $\mathcal B$ such that for any $B \in \mathcal B$ and any set $S \subseteq \mathcal B - \{B \} - N_G(B)$ (where $N_G(B)$ denotes the neighborhood of $B$ in $G$), we have
\begin{equation}
\label{e1}
\Pr( B \mid \overline S) = \Pr( B).
\end{equation}
That is, each bad-event $B \in \mathcal B$ is independent of all other events in $\mathcal B$, except possibly those which are neighbors of $B$ in the dependency graph.   A \emph{lopsidependency graph} is a graph $G$ on vertex set $\mathcal B$, satisfying the relaxed condition that for any $B \in \mathcal B$ and set $S \subseteq \mathcal B - \{B \} - N_G(B)$,
\begin{equation}
\label{e2}
\Pr( B \mid \overline S) \leq \Pr( B).
\end{equation}

A probability space $\Omega$ and collection of bad-events $\mathcal B$ does not have a unique dependency graph or lopsidependency graph. Rather, we suppose that we are given $\Omega, \mathcal B$ and some chosen graph $G$ which is a (lopsi-)dependency graph for them. 

For such a graph $G$ with vertex set $V = \mathcal B$, we say a set $S \subseteq V$ is \emph{stable} if no elements of $S$ are adjacent in $G$. For real numbers $p_v$, indexed by the vertices $v \in V$, we define the \emph{stable set polynomial} of $G$ with respect to base set $S \subseteq V$, denoted $Q(G,S, \vec p)$, by
$$
Q(G,S, \vec p) = \sum_{\substack{\text{stable sets $T$} \\ S \subseteq T \subseteq V}} (-1)^{|T| - |S|} \prod_{v \in T} p_v
$$

With these definitions, we state a few formulations of the LLLL.
\begin{theorem}
\label{lll-basic-thm}
Suppose $G$ is a lopsidependency graph for $\Omega, B$. If any of the following conditions are satisfied, then  $\Pr(\overline{\mathcal B}) > 0$.
\begin{enumerate}
\item (Symmetric LLLL) If $G$ has maximum degree $d$ and every $B \in \mathcal B$ has $\Pr(B) \leq p$ and 
$$
e p (d+1) \leq 1
$$

\item (Asymmetric LLLL) If there is a function $x: \mathcal B \rightarrow (0,1)$  satisfying
    $$
    \forall B \in \mathcal B \qquad \Pr(B) \leq x(B) \prod_{A \in N_G(B)} (1 - x(A))
    $$
    
    \item (Cluster-expansion criterion \cite{bissacot}) If there is a function $\mu: \mathcal B \rightarrow [0,\infty)$ satisfying
    $$
    \forall B \in \mathcal B \qquad  \mu(B) \geq \Pr(B) \Bigl( \mu(B) + \sum_{\substack{Y \subseteq N_G(B) \\ \text{$Y$ stable}}} \prod_{A \in Y}  \mu(A) \Bigr)
      $$
      \end{enumerate}
      \end{theorem}
 
The symmetric LLLL uses only a few crude parameters of the problem instance --- namely, the maximum probability of a bad-event and the maximum degree of the lopsidependency graph. The other variants use progressively more information and take advantage of refined dependency structure.  See also \cite{kolipaka2} for another criterion in this vein. In \cite{shearer}, Shearer derived the most powerful possible criterion in these terms.

\begin{theorem}[Shearer's criterion \cite{shearer}]
\label{shearer}
Let $G$ be a graph on vertex set $V = \{1, \dots, n \}$ and let $p_1, \dots, p_n \in [0,1]$.
\begin{enumerate}
\item Suppose that $Q(G, \emptyset, \vec p) > 0$ and $Q(G,S, \vec p) \geq 0$ for all $S \subseteq V$. Then for any probability space $\Omega$, and any events $B_1, \dots, B_n \subseteq \Omega$ in that space such that $\Pr(B_i) = p_i$ for $i = 1, \dots, n$ and such that $G$ is a lopsidependency graph for $\mathcal B = \{B_1, \dots, B_n \}$, we have $\Pr(\overline{\mathcal B}) \geq Q(G, \emptyset, \vec p)> 0$.

In this case, we say that \emph{Shearer's criterion is satisfied} by $G, \vec p$.

\item Suppose that either $Q(G, \emptyset, \vec p) \leq 0$ or there is some stable set $S \subseteq V$ with $Q(G, S, \vec p) < 0$. Then there is some probability space $\Omega$ and events $B_1, \dots, B_n \subseteq \Omega$ such that $\Pr_{\Omega}(B_i) = p_i$ for $i = 1, \dots, n$ and such that $G$ is a dependency graph for $\mathcal B = \{B_1, \dots, B_n \}$ and $\Pr(\overline{\mathcal B}) = 0$.

In this case, we say that \emph{Shearer's criterion is violated} by $G, \vec p$.
\end{enumerate}
\end{theorem}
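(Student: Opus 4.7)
The plan is to prove the two parts of the theorem separately, using the combinatorial structure of $Q(G,S,\vec p)$ as the central invariant in both directions.

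For part (1), I would induct on $|V|$. Writing $\mu(S)=P(\bigcap_{j\in S}\overline{B_j})$ and $N[i]=\{i\}\cup N_G(i)$, the key algebraic fact is the recursive identity
$$Q(G,\emptyset,\vec p)=Q(G\setminus\{i\},\emptyset,\vec p)-p_i\,Q(G\setminus N[i],\emptyset,\vec p),$$
which mirrors the probabilistic decomposition $\mu(V)=\mu(V\setminus\{i\})-P(B_i\cap\bigcap_{j\ne i}\overline{B_j})$. Tracking $\mu(V)$ alone does not close the induction, because the lopsidependency condition (\ref{e2}) only gives $P(B_i\cap\bigcap_{j\ne i}\overline{B_j})\le p_i\,\mu(V\setminus N[i])$, and this then requires an upper bound on the ratio $\mu(V\setminus N[i])/\mu(V\setminus\{i\})$. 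I would therefore strengthen the inductive claim to $\mu(T)/\mu(S)\le Q(G[T],\emptyset,\vec p)/Q(G[S],\emptyset,\vec p)$ for all $T\subseteq S\subseteq V$, applying the recursive identity on both numerator and denominator. The hypothesis $Q(G,S,\vec p)\ge 0$ for every $S$ keeps every denominator positive along the way, and the strict inequality $Q(G,\emptyset,\vec p)>0$ then yields $\mu(V)\ge Q(G,\emptyset,\vec p)>0$.

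For part (2), the plan is to construct an explicit tight probability space. Take $\Omega$ to be the independent sets of $G$, set the atomic measure $\pi(I)=Q(G,I,\vec p)$, and define $B_i=\{I:i\in I\}$. A Möbius-inversion calculation on the subset lattice shows $\sum_I\pi(I)=1$, $P(B_i)=p_i$, and $P(\overline{\mathcal B})=\pi(\emptyset)=Q(G,\emptyset,\vec p)$; the dependency-graph property follows from a similar identity after restricting to independent sets disjoint from $N[i]$. When $\pi$ is already non-negative but $Q(G,\emptyset,\vec p)\le 0$, this construction immediately gives $P(\overline{\mathcal B})=0$. The remaining subcase, where some $Q(G,S,\vec p)<0$, would be handled by a perturbation/limiting argument: scale $\vec p$ down continuously until the first $S$ at which $Q$ vanishes, apply the canonical construction at that boundary point, and then lift back to the original probabilities by a simple coupling or monotonicity argument preserving the prescribed marginals.

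The main obstacle in part (1) is identifying the right inductive invariant: the naïve claim $\mu(V)\ge Q(G,\emptyset,\vec p)$ does not close, and one must simultaneously carry the whole array of ratios $\mu(T)/\mu(S)$ and prove a uniform comparison with the corresponding ratios of $Q$-values. In part (2), the harder case is the negative-$Q$ subcase, since then the canonical atomic construction ceases to be a measure, and one must produce the counterexample space indirectly while simultaneously preserving the exact marginals $P(B_i)=p_i$ and the specified dependency structure.
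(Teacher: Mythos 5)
The paper does not prove Theorem~\ref{shearer}; it is quoted as a classical result of Shearer and attributed to \cite{shearer} (with the refined version Theorem~\ref{shearer2} cited from \cite{harvey}), so there is no in-paper argument to compare your attempt against. Taken on its own merits, your outline is essentially the standard proof from the literature: part (1) via the deletion--contraction recursion $Q(G,\emptyset,\vec p)=Q(G\setminus\{i\},\emptyset,\vec p)-p_i\,Q(G\setminus N[i],\emptyset,\vec p)$ together with the ratio-form inductive invariant comparing $\mu(T)/\mu(S)$ to $Q(G[T],\emptyset,\vec p)/Q(G[S],\emptyset,\vec p)$, and part (2) via the canonical tight measure $\pi(I)=Q(G,I,\vec p)$ supported on independent sets. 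Both of these are correct and are the right skeleton. The one place your sketch is genuinely thin is the final subcase of part (2), where some $Q(G,S,\vec p)<0$: you need to justify (i) that along the scaling ray from $\vec 0$ to $\vec p$ the first value of $Q$ to reach $0$ is $Q(G,\emptyset,\cdot)$, so the canonical construction is still a bona fide probability measure at the boundary point $\vec p'$, and (ii) how to "lift back" to the prescribed marginals, which is typically done by enlarging each $B_i$ with a fresh independent event of mass $p_i-p'_i$ rather than by a coupling; without spelling these out, the claim that the lift preserves both the marginals and the dependency-graph structure is not yet established. Aside from that, the plan is sound and matches the known proof.
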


Having bad-events with probability $0$ or $1$ is not so interesting, and Theorem~\ref{shearer} can be simplified when we disallow these cases.
\begin{theorem}[\cite{harvey}, Lemma 5.27]
\label{shearer2}
Suppose that $p_1, \dots, p_n \in (0,1)$. Shearer's criterion is satisfied by $G, p$ if and only if $Q(G, S, \vec p) > 0$ for all stable sets $S$.
\end{theorem}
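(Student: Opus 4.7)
The $\Leftarrow$ direction is immediate: if $Q(G, S, \vec p) > 0$ for every independent $S$, then $Q(G, \emptyset, \vec p) > 0$ in particular; and for any non-independent $S$, the sum defining $Q(G, S, \vec p)$ ranges over independent supersets of $S$, which is empty, so $Q(G, S, \vec p) = 0 \geq 0$. Hence Shearer's criterion is satisfied. The substantive direction is $\Rightarrow$: assuming $Q(G, \emptyset, \vec p) > 0$ and $Q(G, S, \vec p) \geq 0$ for all $S \subseteq V$, I want strict positivity on every independent $S$.

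My main tool will be the factorization identity
$$
Q(G, S, \vec p) = \Bigl(\prod_{v \in S} p_v\Bigr) \cdot Q\bigl(G[V \setminus N_G[S]], \emptyset, \vec p|_{V \setminus N_G[S]}\bigr),
$$
valid for every independent set $S$. I obtain this by iterating the single-vertex recursion $Q(G, S \cup \{v\}, \vec p) = p_v \cdot Q(G - N_G[v], S, \vec p)$ (for $v \notin S \cup N_G(S)$), which itself comes from partitioning the sum in the definition of $Q(G, S, \vec p)$ according to whether $v \in T$. Since every $p_v > 0$, strict positivity of $Q(G, S, \vec p)$ is equivalent to strict positivity of $Q(G[W], \emptyset, \vec p|_W)$ for $W = V \setminus N_G[S]$. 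So it suffices to show $Q(G[W], \emptyset, \vec p|_W) > 0$ for every $W \subseteq V$.

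I plan to prove this reduced claim by induction on $|V \setminus W|$, removing one vertex at a time. The other key identity is $Q(G - v, S, \vec p) = Q(G, S, \vec p) + Q(G, S \cup \{v\}, \vec p)$ (when $v \notin S \cup N_G(S)$; when $v \in N_G(S)$ the second term vanishes and the equality is trivial). Both right-hand terms are non-negative under Shearer's criterion, so the criterion is preserved when passing to $G - v$. Specializing to $S = \emptyset$ gives
$$
Q(G - v, \emptyset, \vec p) = Q(G, \emptyset, \vec p) + p_v \cdot Q(G - N_G[v], \emptyset, \vec p) \geq Q(G, \emptyset, \vec p) > 0,
$$
so strict positivity at the empty set is also preserved under vertex removal. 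Iterating yields $Q(G[W], \emptyset, \vec p|_W) \geq Q(G, \emptyset, \vec p) > 0$ for every $W$, closing the argument.

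The main obstacle, I expect, is locating and setting up the factorization identity: once it is in hand, everything reduces to routine manipulations with the single-vertex-removal identity. A more naive approach by induction on $|S|$ using the recursion $Q(G, S, \vec p) = Q(G - v, S, \vec p) - Q(G, S \cup \{v\}, \vec p)$ directly runs into the difficulty that the subtraction does not easily propagate strict inequalities from non-strict hypotheses; the factorization identity sidesteps this by expressing every $Q(G, S, \vec p)$ as a product with no cancellation.
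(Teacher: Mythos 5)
Your proof is correct. Note that the paper itself does not prove this theorem---it simply cites Harvey--Vondr\'ak, Lemma~5.27---so there is no argument in the paper to compare against, and you have supplied a clean self-contained one. It rests on two standard identities for the independent-set polynomial: the factorization $Q(G,S,\vec p)=\bigl(\prod_{v\in S}p_v\bigr)\,Q\bigl(G[V\setminus N_G[S]],\emptyset,\vec p\bigr)$ for independent $S$, which (since every $p_v>0$) reduces strict positivity at $S$ to strict positivity of $Q(\cdot,\emptyset,\vec p)$ on the induced subgraph $G[V\setminus N_G[S]]$; and the deletion identity $Q(G-v,S,\vec p)=Q(G,S,\vec p)+Q(G,S\cup\{v\},\vec p)$, from which you correctly deduce both that Shearer's criterion (i.e.\ nonnegativity for every $S$, plus strict positivity at $\emptyset$) is inherited by $G-v$, and that $Q(G-v,\emptyset,\vec p)\geq Q(G,\emptyset,\vec p)$. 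Iterating yields $Q(G[W],\emptyset,\vec p)\geq Q(G,\emptyset,\vec p)>0$ for every $W\subseteq V$, and the factorization then converts this to $Q(G,S,\vec p)>0$ for every independent $S$. The one subtlety that needs care---and that you do handle---is that the iteration requires propagating the full hypothesis $Q(\cdot,S,\vec p)\geq 0$ for all $S$ at every deletion step, not merely the strict positivity at $\emptyset$; your observation that the second term on the right of the deletion identity is either $Q(G,S\cup\{v\},\vec p)\geq 0$ (when $S\cup\{v\}$ is admissible) or identically zero (when $v\in N_G(S)$) closes that gap.
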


Thus, Shearer's criterion exactly characterizes which probability and lopsidependency structure of the bad-events  guarantees a positive probability of avoiding $\mathcal B$.  From a theoretical point of view, alternate bounds such as  Theorem~\ref{lll-basic-thm} are all weaker than, and are implied by, Shearer's criterion. However,  Shearer's criterion is technically difficult to apply to constructions in combinatorics. 

\subsection{The variable-assignment LLLL}
The LLLL has been applied to diverse probability spaces such as random permutations \cite{lu-szekeley}, Hamiltonian cycles \cite{albert}, and perfect matchings \cite{lu-szekeley2}. However, by far the most common form of the LLL and LLLL concerns what we refer to as the \emph{variable-assignment} setting.  Here, the probability space $\Omega$ has $m$ independent discrete random variables $X_1, \dots, X_m$, and the bad-events can be taken to be ``monomial events''; that is, each $B \in \mathcal B$ can be written in the form
$$
(X_{i_1} = j_1) \wedge (X_{i_2} = j_2) \wedge \dots \wedge (X_{i_k} = j_k)
$$

For such a monomial event, we define $\text{var}(B) = \{i_1, \dots, i_k \}$.  We say that two events $B, B'$ \emph{disagree} on variable $i$ if $B$ demands $X_i = j$ and $B'$ demands $X_i = j'$ for $j \neq j'$. 

\begin{Definition}
The \emph{canonical dependency graph} $G$ has an edge $(B, B')$ iff $\text{var}(B) \cap \text{var}(B') \neq \emptyset$. The \emph{canonical lopsidependency graph} $G$ has edge $(B, B')$ iff $B$ disagrees with $B'$ on any variable $i \in \text{var}(B) \cap \text{var}(B')$.
\end{Definition}

It is immediate that the canonical dependency graph is, indeed, a dependency graph for $\Omega, \mathcal B$. The fact that the canonical lopsidependency graph is a lopsidependency graph follows from the FKG inequality. Most applications of the LLL use only the canonical dependency graph; some noteworthy applications of the canonical lopsidependency graph include monochromatic hypergraph coloring \cite{mcdiarmid} and boolean satisfiability \cite{gst}.  We will discuss the latter in much more detail later.

In \cite{kolipaka}, Kolipaka \& Szegedy noted that the Shearer criterion is not tight for the variable-assignment LLL setting. Namely, they found an explicit dependency graph and vector of probabilities where the Shearer criterion is violated yet any variable-assignment realization must have a satisfying assignment. Later work \cite{he3} provided a more systematic description of which dependency graphs were satisfiable in the variable-assignment setting.

\subsection{The Moser-Tardos algorithm}
The LLLL ensures that $\Pr(\overline {\mathcal B}) > 0$, and this is usually sufficient for combinatorics where the main goal is to show existence results. However, typically $\Pr( \overline {\mathcal B})$ is exponentially small, and hence the LLLL does not give efficient algorithms for \emph{constructing} such a configuration. In \cite{moser-tardos}, Moser \& Tardos introduced a remarkably simple algorithm  for the variable-assignment LLLL setting:

\begin{algorithm}[H]
\centering
\begin{algorithmic}[1]
\State Draw each variable independently from the distribution $\Omega$.
\While{there is a true bad-event on $X$}
\State  Choose a true bad-event $B$ arbitrarily.
\State Resample $\text{var}(B)$ according to the distribution $\Omega$.
\EndWhile
\end{algorithmic}
\caption{The Moser-Tardos (MT) algorithm}
\end{algorithm}

They showed that when the asymmetric LLLL criterion is satisfied with respect to the canonical lopsidependency graph, then this algorithm terminates in expected polynomial time with a configuration avoiding $\mathcal B$. Later work \cite{kolipaka} showed that this algorithm terminates quickly whenever the Shearer criterion is satisfied. Thus, at least for the variable-assignment LLLL setting, this gives an efficient algorithm for nearly every construction based on the LLLL.

In \cite{harris2}, Harris gave a different type of criterion for the Moser-Tardos algorithm. Unlike the symmetric LLLL or other similar criteria, this cannot be stated solely in terms of the dependency graph and the probabilities of the bad-events.  We summarize it here (in a slightly simplified form).

\begin{Definition}[Orderability] 
\label{ord-def} Given $B \in \mathcal B$, we say that a set of bad-events $Y \subseteq \mathcal B$ is \emph{orderable} to $B$, if  there is an ordering $Y = \{B_1, \dots, B_s \}$, such that, for each $i = 1, \dots, s$, there is a variable $z_i \in \text{var}(B) \cap \text{var}(B_i)$ where $B$ disagrees with $B_i$ on $z_i$ but $B$ does not disagree with $B_1, \dots, B_{i-1}$ on $z_i$.
\end{Definition}

\begin{theorem}[\cite{harris2}]
\label{var-assignment-thm}
Suppose there is $\mu: \mathcal B \rightarrow [0, \infty)$ satisfying the condition
$$
\forall B \in \mathcal B \qquad  \mu(B) \geq \Pr(B) \Bigl( \mu(B) + \sum_{\substack{\text{$Y$ orderable}\\\text{to $B$}}}  \prod_{A \in Y} \mu(A) \Bigr)
$$

Then the Moser-Tardos algorithm terminates with probability 1.
\end{theorem}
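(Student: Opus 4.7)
The plan is to follow the classical witness-tree framework of Moser and Tardos, modified to exploit the orderability structure in place of the usual dependency-neighborhood structure. Given an execution of the MT algorithm and a resampling step $t$ at which bad-event $B^{(t)}$ is resampled, I would build a labeled tree $\tau_t$ whose root carries label $B^{(t)}$ and whose remaining nodes record earlier resampled bad-events that share variables with it in a causally appropriate way. Standard arguments will then yield that (i) the map $t \mapsto \tau_t$ is injective, so bounding the total number of trees that can ever arise bounds the total number of resamplings, and (ii) for any fixed such tree $\tau$ the probability it is produced by the algorithm is at most $\prod_{v \in \tau} P(B_v)$, since along each branch the variable values prescribed by successive labels must appear consecutively in the independent resampling stream.

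The main obstacle is arranging the construction so that, for every internal node $v$ of $\tau_t$ with label $B_v$, the multiset $Y_v$ of labels on its children forms a set orderable to $B_v$ in the sense of the definition. I would attempt this by scanning the execution log backwards from step $t$ and, for each previously resampled bad-event $A$ sharing a variable with $B_v$, attaching $A$ as a child of the deepest eligible existing node, exactly as in the usual MT construction. The critical step is then to establish property (O2): I would order the children of $v$ by the times they were attached and show, by induction on this order, that for each child $B_i$ there exists an element $z_i \in B_v$ with $z_i \sim B_i$ but $z_i \not\sim B_1, \dots, B_{i-1}$. The required witness $z_i$ should emerge from the observation that $B_i$ must disagree with $B_v$ on some variable not yet ``claimed'' by any $B_j$ with $j<i$, since otherwise $B_i$ would have been attached underneath one of those earlier children rather than as a direct child of $v$. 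Making this bookkeeping argument precise — and in particular verifying it in the lopsidependency rather than the naive dependency setting — is the delicate point.

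Finally, define $x(B) = \sum_\tau \prod_{v \in \tau} P(B_v)$, where the sum ranges over all finite labeled trees rooted at $B$ in which the children of every node form a set orderable to the parent's label. Grouping trees according to the label-multiset at the root yields the recursion
$$x(B) = P(B) \sum_{\substack{Y \text{ orderable}\\\text{to } B}} \prod_{A \in Y} x(A).$$
A straightforward induction on the truncation depth, together with the hypothesized inequality for $\mu$, gives $x(B) \leq \mu(B)$ for every $B \in \mathcal B$. Combined with the injectivity of the witness-tree map and the probability bound, this shows that the expected number of times any bad-event $B$ is resampled during the MT algorithm is at most $x(B)/P(B) \leq \mu(B)/P(B) < \infty$, so that the algorithm terminates with probability $1$.
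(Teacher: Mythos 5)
The paper does not prove Theorem~\ref{var-assignment-thm}; it is stated as an imported result from~\cite{harris2}. So there is no in-paper proof to compare against, and the evaluation is on the merits of your argument alone.

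Your high-level plan — witness trees, injectivity, per-tree probability bound $\prod_{v} P(B_v)$, a generating-function $x(B)$ controlled by $\mu$ — is the right shape, and the last paragraph (the induction on truncation depth to get $x(B) \le \mu(B)$) is sound given what precedes it. The gap is exactly where you flagged it, and it is more than a bookkeeping issue. With the usual MT attachment rule (attach to the deepest node sharing a variable), nothing forces a child $A$ of a node $v$ to \emph{disagree} with $v$ on the shared variable: $A$ can contain $(i,j)$ while $v$ also contains $(i,j)$, in which case no $z \in B_v$ satisfies $z \sim A$ and condition (O2) cannot even get started. And if you instead attach based on disagreement rather than variable sharing, a different failure appears: two children $A, A'$ of $v$ can both disagree with $v$ only at the same pair $(i,j) \in B_v$ yet agree with each other (e.g.\ both contain $(i,j')$ for the same $j' \ne j$), so neither is eligible to be attached beneath the other and the ``fresh variable'' $z_i$ you want for the second one does not exist. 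Your heuristic — ``otherwise $B_i$ would have been attached under an earlier child'' — is exactly where these scenarios slip through, because attachability is decided by a different relation (variable sharing, or pairwise disagreement) than the one orderability needs (disagreement with $v$ on a variable not yet ``used'' by earlier siblings).

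Closing this gap is the substantive content of the cited proof: it requires a witness structure in which, for each pair $(i,j) \in B_v$, one tracks the specific resampling that \emph{last changed} $X_i$ to the value $j$ (skipping over resamplings that set $X_i$ to $j$ when it already equalled $j$), so that each child is tied to a distinct, genuinely disagreeing variable–value pair of $v$, and the leftover possibility of a same-label ``pass-through'' child is what the (O1) clause $Y = \{B\}$ is there to absorb. Your write-up neither constructs this refined tree nor explains why (O1) is needed at all — indeed your recursion for $x(B)$ sums over all orderable $Y$ including $Y = \{B\}$, which puts $x(B)$ on both sides, and you would need to justify that the resulting fixed-point equation is the one the tree count actually satisfies. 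As it stands, the proposal correctly identifies the theorem's architecture but leaves the central lemma (children of every node form an orderable set) unproven, and the sketch you offer for it does not survive the two counter-scenarios above.
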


Theorem~\ref{var-assignment-thm} is superficially similar to the cluster-expansion criterion. It is strictly stronger than the asymmeric LLLL and certain simplified forms of the cluster-expansion criterion. However, its relation to the Shearer criterion is not clear.  It is quite plausible, along the lines of \cite{kolipaka, he}, that it truly takes advantage of extra information in the variable assignment LLLL. On the other hand it is quite plausible that Theorem~\ref{var-assignment-thm} is more along the lines of \cite{kolipaka2}, namely, it provides a more accurate and computationally efficient approximation to Shearer's criterion.

In this paper, we will construct a problem instance for which Theorem~\ref{var-assignment-thm} is satisfied, yet Shearer's criterion is violated. Thus, it is impossible to deduce the fact that $\Pr(\overline {\mathcal B}) > 0$ based only on the probabilities and interdependency structure of the bad-events; it is necessary to take into account the decomposition of the bad-events into variables (as is provided by Theorem~\ref{var-assignment-thm}). In other words, Theorem~\ref{var-assignment-thm} can be stronger than Shearer's criterion.

We emphasize that Shearer's criterion concerns arbitrary probability spaces; one cannot hope to provide a stronger criterion than Shearer's \emph{for the level of generality to which the latter applies}. The strength of Theorem~\ref{var-assignment-thm} comes from its \emph{less general} setting (the variable assignment LLLL), which is nevertheless encompasses many applications in combinatorics.

We also remark on other related criteria for the variable-assignment LLL setting. For instance, \cite{he,he2} derive certain convergence conditions in terms of the bipartite  graph $H$ on vertex sets $\{1, \dots, m \}$ and  $\mathcal B$ and an edge on $(i, B)$ when $i \in \text{var}(B)$, and \cite{he3} derives conditions in terms of the probabilities that certain neighboring bad-events hold simultaneously.

\section{Satisfiability with bounded variable occurrence}
Consider boolean $k$-satisfiability instances, where we have $m$ boolean variables $X_1, \dots, X_m$ and $n$ clauses $C_1, \dots, C_n$ of width $k$, each of the form
$$
C_i \equiv l_{i1} \vee l_{i2} \vee \dots \vee l_{ik}
$$
for distinct literals $l_{i1}, \dots, l_{ik}$  (i.e. expressions of the form $X_j$ or $\neg X_j$). The goal is to produce a value for the boolean variables $X_1, \dots, X_m  \in \{T, F \}^m$ such that all the clauses $C_i$ are simultaneously true. Equivalently, we want to find a satisfying assignment of the conjuctive-normal form (CNF) formula 
$$
\Phi = \bigwedge_{i=1}^n l_{i1} \vee l_{i2} \vee \dots \vee l_{ik}
$$

We are interested specifically in instances where each variable appears in a bounded number of clauses.  For each $i = 1, \dots, m$, define $R_0(\Phi,i)$ and $R_1(\Phi,i)$ to be the number of clauses which contain the literal $X_i$ (respectively $\neg X_i$), and let $R(\Phi,i) = R_0(\Phi,i) + R_1(\Phi,i)$.   In \cite{kratochvil}, Kratochv\'{i}l, Savick\'{y}, and Tuza defined the function $f(k)$ as the largest integer $L$ such that whenever $R(\Phi,i) \leq L$ for all $i$, then $\Phi$ is satisfiable; they showed $f(k) \geq \frac{2^k}{e k}$. A series of later works \cite{savicky-sgall, hoory, gebauer, gst} showed a variety of upper and lower bounds of $f(k)$. In particular, \cite{gst} showed
$$
\Big \lfloor \frac{2^{k+1}}{e (k+1)} \Big \rfloor \leq f(k) \leq (1 + O(k^{-1/2})) \frac{2^{k+1}}{e k},
$$

The lower bound comes from the variable-assignment LLLL. Here, the probability space $\Omega$ is defined by setting each variable $X_i = T$ with a certain probability $p_i$ given by
$$
p_i = 1/2 + x \frac{ R_1(\Phi,i) - R_0(\Phi,i) }{R(\Phi,i)}
$$
for some carefully chosen parameter $x \geq 0$. Then, for each clause $C_i$, there is a corresponding bad-event $B_i$ that $C_i$ is false, namely $B_i$ has the form
$$
(X_{i1} = j_{i1}) \wedge \dots \wedge (X_{ik} = j_{ik})
$$
where $j_{i1}, \dots, j_{ik} \in \{T, F \}$.  Using Theorem~\ref{var-assignment-thm} in place of the LLLL, and using a slightly different value for the probabilities $p_i$, Harris \cite{harris2} showed a stronger bound
\begin{equation}
\label{harris-fk-eqn}
f(k) \geq \frac{2^{k+1} (1 - 1/k)^k}{k-1} - \frac{2}{k}
\end{equation}

With these constructions, we thus know the asymptotic bound
$$
f(k) \sim \frac{2^{k+1}}{e k};
$$
nevertheless, there are two main reasons to determine $f(k)$ as precisely as possible. First, since $f(k)$ grows exponentially in $k$,  the asymptotic value is not as relevant for practical applications. Second, \cite{kratochvil} showed a sudden gap in the computational complexity of $k$-SAT: for problem instances where variables may appear in $f(k)+1$ clauses, it is NP-complete to determine satisfiability. On the other hand, problems instances where they appear in at  most $f(k)$ clauses are always satisfiable and the problem is computationally vacuous. Thus, tiny gaps in the value of $f(k)$ can lead to huge gaps in computational hardness.

\subsection{Restricting the number of occurrences of each literal}
Our goal  is to demonstrate that the bound in Eq.~(\ref{harris-fk-eqn}) cannot be shown from the Shearer criterion. If the probability space $\Omega$ is allowed to vary in a problem-specific way, then any satisfiable formula can trivially satisfy the LLL: namely, $\Omega$ puts probability mass $1$ on some satisfying assignment. Thus, in order to separate the LLL and Theorem~\ref{var-assignment-thm}, we must restrict $\Omega$ to be problem-independent.

In both the constructions of \cite{gst} and \cite{harris2}, the probabilities $p_i$ depend solely on the imbalance between $R_0(\Phi,i)$ and $R_1(\Phi,i)$. They use slightly different formulas; however, in both constructions, the extremal case is when $R_0(\Phi,i) = R_1(\Phi,i)$, in which case $p_i$ is set to $1/2$. 

Accordingly,  let us define $f'(k)$ to be the largest integer $L$ such that whenever $R_0(\Phi,i) \leq L$ and $R_1(\Phi,i) \leq L$  for all $i$, then the formula $\Phi$ is satisfiable. Clearly $f'(k) \geq f(k)/2$. This function is also studied in \cite{gebauer}, with slightly different terminology, in terms of a combinatorial object called a $(k,d)$-tree. 
\begin{Definition}[\cite{gst}]\negthickspace \negthickspace \footnote{The definitions of $(k,d)$-trees are slightly shifted in the two papers; the object referred to as a $(k,d)$-tree in \cite{gst} is referred to as a $(k-1,d)$-tree in \cite{gebauer}. To put things on a consistent footing, we have adopted the terminology of \cite{gst}.}
A $(k,d)$-tree is a binary tree $T$ where every leaf has depth at least $k$, and every node $u$ of $T$ has at most $d$ descendant leaves within distance $k$ of $u$.
\end{Definition}
We quote the following two results from \cite{gebauer} and \cite{gst}:
\begin{theorem}
\begin{itemize}
\item \cite[Lemma 2]{gebauer} If there exists a $(k-1,d)$-tree, then there is an unsatisfiable $k$-CNF formula where every literal occurs in at most $d$ clauses.
\item \cite[Theorem 1.3]{gst} For any $k \geq 1$, there exists a $(k,d)$ tree with $d = (2/e + O(k^{-1/2})) 2^k/k$
\end{itemize}
\end{theorem}

This immediately gives the following result:
\begin{theorem}
$f'(k) \leq (1 + O(k^{-1/2})) \frac{2^k}{e k}$
\end{theorem}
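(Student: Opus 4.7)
The plan is to build on the upper-bound construction for $f(k)$ from \cite{gst} and leverage the fact that $f'(k)$ bounds positive and negative occurrences separately. Since \cite{gst} establishes $f(k) \leq L := (2/e + O(k^{-1/2})) 2^k/k$ by exhibiting an unsatisfiable $k$-CNF $\Phi^*$ with $R(\Phi^*, i) \leq L+1$ for all $i$, if this construction can be arranged so that each variable has approximately equal positive and negative occurrences, we immediately obtain $R_0(\Phi^*, i), R_1(\Phi^*, i) \leq L/2 + o(L) = (1/e + O(k^{-1/2})) 2^k/k$, yielding the desired upper bound on $f'(k)$. The leading factor-of-$2$ improvement matches exactly what one would expect when splitting a per-variable occurrence budget evenly between the two polarities.

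Concretely, I would proceed in three steps. First, recall the construction from \cite{gst} and separate it into its structural part (the clause--variable incidence hypergraph) and its polarity-assignment part. Second, reassign polarities uniformly at random over each literal occurrence. A Chernoff bound combined with a union bound over the $m$ variables shows that with positive probability every variable satisfies $\lvert R_0(\Phi^*, i) - R_1(\Phi^*, i) \rvert = O(\sqrt{L \log m})$ simultaneously. For the relevant regime $L = \Theta(2^k/k)$ and $m \leq 2^{O(k)}$, this deviation is $o(L)$, which gets absorbed into the $O(k^{-1/2})$ error term. Third, observing that the unsatisfiability argument in \cite{gst} typically relies only on clause counts and incidence structure rather than on the specific signs of literals, the randomized formula remains unsatisfiable, and hence witnesses the stated bound on $f'(k)$.

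The main obstacle is verifying that \cite{gst}'s unsatisfiability argument is genuinely polarity-agnostic. If the argument relies critically on a specific sign pattern as part of its certificate of unsatisfiability, a uniform random polarity reassignment would potentially destroy unsatisfiability. In that case, the remedy would be a deterministic symmetrization: for instance, forming an augmented formula from $\Phi^*$ together with a variant obtained by negating a carefully chosen subset of variables, tracking the combined literal counts to ensure that neither $R_0$ nor $R_1$ grows faster than $L/2$ on any variable. Alternatively, one could extract from the \cite{gst} argument a direct construction of a balanced unsatisfiable $k$-CNF with the required parameters. In either case, the correction to the leading constant should remain of order $O(k^{-1/2}) \cdot 2^k/k$, matching the claimed bound.
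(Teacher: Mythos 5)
Your approach has a genuine gap. The crux of your argument is the claim that one can randomly (or deterministically) reassign polarities in the unsatisfiable formula $\Phi^*$ of \cite{gst} and have the resulting formula remain unsatisfiable; you acknowledge this as ``the main obstacle'' and conjecture that the unsatisfiability argument is polarity-agnostic. This is almost certainly false. Unsatisfiability is extremely sensitive to the sign pattern: for instance $(X_1) \wedge (\overline{X_1})$ is unsatisfiable, but flipping the polarity of a single literal occurrence yields $(X_1) \wedge (X_1)$, which is satisfiable. Generically, a random per-occurrence reassignment will make an unsatisfiable CNF satisfiable. Your fallback --- negating an entire variable uniformly across all its occurrences --- does preserve unsatisfiability (it is just a renaming), but it simply swaps $R_0(\Phi,i)$ with $R_1(\Phi,i)$ for that $i$ and therefore cannot shrink $\max\{R_0,R_1\}$. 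So neither the randomized nor the deterministic variant closes the gap.

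The paper's proof sidesteps this entirely: it quotes the fact from \cite{gst} that a $(k-1,d)$-tree yields an unsatisfiable $k$-CNF in which each \emph{literal} (not variable) appears at most $d$ times, and that $(k-1,d)$-trees exist with $d \leq (2/e + O(k^{-1/2})) 2^{k-1}/(k-1)$. The factor-of-two improvement therefore does not come from balancing occurrences in an existing formula; it comes from the dimension shift $k \mapsto k-1$ inside the $2^k$ and the fact that the \cite{gst} construction is \emph{already} stated as a per-literal bound. If you want to repair your proposal, you would need to find (and verify) that \cite{gst} provides a formula with a per-literal occurrence bound rather than trying to post-process a per-variable bound --- which is exactly the $(k,d)$-tree machinery the paper invokes.
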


Let us use the LLL and Theorem~\ref{var-assignment-thm} to show more precise lower bounds on $f'(k)$.  We will fix a problem-independent probability space $\Omega$ to set each $X_i$ to be T with probability $p_i = 1/2$. For each clause $C_i$, we have a bad-event $B_i$ with probability $\Pr(B_i) = p = 2^{-k}$.
\begin{theorem}[Follows easily from the symmetric LLLL]
\label{fl-bound}
$f'(k) \geq \lfloor \frac{2^k}{e k} - 1/k \rfloor$
\end{theorem}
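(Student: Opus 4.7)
The plan is to apply the symmetric LLLL directly to the natural variable-assignment formulation of $k$-SAT with a uniform half-half distribution on each variable, being careful to use the canonical lopsidependency graph (rather than the canonical dependency graph) so that the degree bound saves a factor of two.

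Concretely, first I would set up the probability space $\Omega$ by letting each boolean variable $X_m$ independently take the value $T$ or $F$ with probability $1/2$. For each clause $C_i = l_{i1} \vee \dots \vee l_{ik}$, let $B_i$ be the atomic bad-event that every literal in $C_i$ evaluates to false; then $P(B_i) = 2^{-k}$. The canonical lopsidependency graph $G$ on $\{B_1,\dots,B_n\}$ has an edge between $B_i$ and $B_j$ iff the clauses disagree on some variable, which in the $k$-SAT setting means that there is some variable $X_m$ appearing in $C_i$ with one sign and in $C_j$ with the opposite sign.

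Next I would bound the maximum degree $d$ of $G$. Fix a clause $C_i$ and one of its $k$ literals, say $l$; a clause $C_j \neq C_i$ contributes a neighbor via $l$ only if $C_j$ contains the negation $\bar l$. By hypothesis the literal $\bar l$ appears in at most $L := \lfloor 2^k/(ek) - 1/k\rfloor$ clauses, so summing over the $k$ literals of $C_i$ gives $d \leq kL$. Here it is essential to work with the canonical lopsidependency graph: if we instead used the canonical dependency graph we would have to count all clauses sharing any variable with $C_i$, giving a bound $2kL$ that is too weak by roughly a factor of two.

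Finally I would plug this into the symmetric LLLL criterion $ep(d+1) \leq 1$:
\[
e \cdot 2^{-k} \cdot (kL + 1) \leq 1 \iff L \leq \frac{2^k/e - 1}{k} = \frac{2^k}{ek} - \frac{1}{k}.
\]
Since $L$ is an integer, the choice $L = \lfloor 2^k/(ek) - 1/k \rfloor$ satisfies this, so $P(\bigcap_i \overline{B_i}) > 0$ and thus $\Phi$ is satisfiable, establishing $f'(k) \geq \lfloor 2^k/(ek) - 1/k\rfloor$. There is no real obstacle: the whole argument is a few lines, and the only subtle point is remembering to use the canonical lopsidependency graph so that only opposite-sign occurrences count toward the degree.
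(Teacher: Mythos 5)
Your proof is correct and follows essentially the same route as the paper: same uniform product space, same atomic bad-events with $P(B)=2^{-k}$, the same $d\leq kL$ degree bound from the canonical lopsidependency graph, and the same application of $ep(d+1)\leq 1$. Your explicit remark on why the lopsidependency graph (rather than the dependency graph) is needed matches what the paper does implicitly in its degree count.
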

\begin{proof}
Consider some bad-event, without loss of generality
$$
B \equiv (X_1 = T) \wedge \dots \wedge (X_k = T)
$$

The neighbors of $B$ in the canonical lopsidependency graph $G$ are bad-events involving $X_i = F$ for some $i = 1, \dots, k$; as each literal occurs at most $L$ times, there are at most $d = k L$ such bad-events.  The symmetric LLLL criterion $e p (d+1) \leq 1$ then holds if $L \leq \frac{2^k}{e k} - 1/k$.
\end{proof}

\begin{theorem}[From Theorem~\ref{var-assignment-thm}]
\label{fm-bound}
Suppose that 
$$
R_0(\Phi,i), R_1(\Phi,i) \leq \frac{ (2^k-1)(1-1/k)^{k-1}}{k}
$$
for all $i$. Then  the Moser-Tardos algorithm finds a satisfying assignment of $\Phi$ in expected polynomial time. In particular,
$$
f'(k) \geq \Big \lfloor \frac{ (2^k-1)(1-1/k)^{k-1}}{k} \Big \rfloor
$$
\end{theorem}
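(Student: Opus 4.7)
The plan is to invoke Theorem \ref{var-assignment-thm} with the uniform weight function $\mu(B) = \mu > 0$ assigned to every bad event, with $\mu$ to be optimized at the end. Because every variable is drawn uniformly from $\{T, F\}$, each bad event satisfies $P(B) = 2^{-k}$, so Harris's condition collapses to the single numerical inequality
\[
\mu \;\geq\; 2^{-k} \sum_{Y \text{ orderable to } B} \mu^{|Y|}.
\]
Bounding the right-hand side, and then maximizing $L$ subject to this constraint, will yield the theorem.

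The key step is estimating $\sum_Y \mu^{|Y|}$. The unique (O1) orderable set is $Y = \{B\}$, contributing $\mu$. For sets of type (O2) with witness sequence $(z_1, B_1), \ldots, (z_s, B_s)$, I first observe that the witness positions are forced to be distinct: if $z_i = z_j$ with $j < i$, then $z_i \sim B_j$ since $z_j \sim B_j$, contradicting the orderability condition on $B_i$. A symmetric argument shows $B_1, \ldots, B_s$ are distinct. Consequently every (O2) orderable set can be encoded by an unordered subset $Z \subseteq B$ of positions together with an assignment $z \mapsto B_z$ with $B_z \sim z$. In the $k$-SAT setting, a position $z \in B$ corresponds to a literal $l$ that $B$ falsifies, and $B' \sim z$ iff the clause of $B'$ contains the opposite literal $\bar l$; hence the number of choices for $B_z$ is at most $L$. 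This encoding may overcount (a single $Y$ admitting several valid witness sequences), but as an upper bound I obtain
\[
\sum_{Y \text{ via (O2)}} \mu^{|Y|} \;\leq\; \sum_{Z \subseteq B} (L\mu)^{|Z|} \;=\; (1 + L\mu)^k.
\]
Adding the (O1) contribution, Harris's condition becomes $(2^k - 1)\mu \geq (1 + L\mu)^k$.

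The remaining step is a clean single-variable optimization. Writing $u = L\mu$, the constraint reads $L = u(2^k - 1)/(1+u)^k$; differentiating, the maximizer is $u = 1/(k-1)$, and back-substitution gives
\[
L \;=\; \frac{(2^k - 1)(1 - 1/k)^{k-1}}{k},
\]
exactly the bound in the statement. Expected polynomial running time follows from the standard Moser--Tardos slack argument: whenever the integer-valued $R_0(\Phi,i), R_1(\Phi,i)$ lie at or below the (typically non-integer) threshold, Harris's inequality holds strictly, which upgrades termination with probability 1 to expected polynomial time.

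The main obstacle is the orderability count. Orderability is inherently an ordered notion, so a naive enumeration over witness sequences overspends a factor of $s!$ per set; the crucial insight is that the distinctness of witness positions lets one collapse the enumeration onto unordered subsets of $B$, producing the clean generating function $(1 + L\mu)^k$ and thereby the tight threshold.
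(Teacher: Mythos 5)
Your proposal is correct and takes essentially the same approach as the paper: a uniform weight $\mu(B)=\alpha$, the bound $\alpha + (1+L\alpha)^k$ on the orderable-set generating function obtained by choosing at most one disagreeing clause per position of $B$, and the same single-variable optimization yielding $L = (2^k-1)(1-1/k)^{k-1}/k$. Your explicit distinctness-of-witness-positions argument spells out a step the paper states tersely, and the change of variables $u=L\mu$ is a cosmetic rephrasing of the paper's direct maximization over $\alpha$.
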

\begin{proof}
We will set $\mu(B) = \alpha$ for all $B \in \mathcal B$, where $\alpha \geq 0$ is some parameter to be determined. Consider some bad-event, without loss of generality
$$
B \equiv (X_1 = T) \wedge \dots \wedge (X_k = T)
$$

It is difficult to list all orderable sets of neighbors of $B$ according to Definition~\ref{ord-def}. However, to apply Theorem~\ref{var-assignment-thm}, we only need to provide an \emph{upper bound} on the sum over such orderable sets (possibly including some additional neighbor-sets $Y$ as well). Any such orderable set will have, for each $j = 1, \dots, k$, a choice of zero or one bad-events $A_j$ which first disagree with $B$ on variable $X_j$. (That is, in Definition~\ref{ord-def}, we have $B_i = A_j$ where $z_i = X_j$). Thus, we have an upper bound:
$$
\sum_{\substack{\text{$Y$ orderable}\\\text{to $B$}}} \prod_{A \in Y} \mu(A) \leq  \prod_{j=1}^k (1 + R_1(\Phi,j) \alpha) \leq (1 + L \alpha)^k
$$

So a  sufficient criterion to satisfy Theorem~\ref{var-assignment-thm} is
\begin{equation}
\label{ff2}
\alpha \geq 2^{-k} (\alpha + (1 + L \alpha)^k)
\end{equation}

We choose $\alpha$ to maximize $\alpha - 2^{-k} (\alpha + (1 + L \alpha)^k)$; simple calculus gives $\alpha = \frac{\bigl(\frac{2^k-1}{k L}\bigr)^{\frac{1}{k-1}}-1}{L}$, which is non-negative for $L \leq \frac{2^k - 1}{k}$. With this choice of $\alpha$, the condition (\ref{ff2}) is satisfied for 
$$
L \leq \frac{ (2^k-1)(1-1/k)^{k-1}}{k}
$$

Thus, if $L \leq  \frac{ (2^k-1)(1-1/k)^{k-1}}{k}$ and $L \leq \frac{2^k-1}{k}$, then Theorem~\ref{var-assignment-thm} is satisfied. The second condition $L \leq \frac{2^k - 1}{k}$ can be easily seen to be redundant, leading to the given bounds.
\end{proof}

In either case, we have $f'(k) \sim 2^k/(e k) \sim f(k)/2$. Let us define $F_{\text{LLL}}(k) = \lfloor \frac{2^{k}}{e k} - 1/k \rfloor$ and $F_{\text{MT}}(k) = \lfloor \frac{ (2^k-1) (1 - 1/k)^{k-1}}{k} \rfloor$ to be the bounds on $f'(k)$ which are provable respectively from the symmetric LLLL (Theorem~\ref{fl-bound}) and from the criterion of Theorem~\ref{fm-bound}. We observe that 
$$
F_{\text{MT}}(k) - F_{\text{LLL}}(k) \geq \frac{2^k}{2 e k^2} - 1
$$

So the gap between the LLL and Theorem~\ref{var-assignment-thm} appears to be growing exponentially in $k$. (The relative difference between the formulas approaches zero, however).
 
\section{Constructing the extremal formula $\Phi$}
Let us fix integers $L,k$.  We will  construct a $k$-SAT instance $\Phi$ with $R_0(\Phi,i), R_1(\Phi,i) \leq L$, in which the Shearer criterion is \emph{violated} for the canonical lopsidependency graph corresponding to the natural space $\Omega$ where $\Pr(X_i = T) = 1/2$, and all variables $X_i$ are  independent, and with the natural collection of bad-events corresponding to the clauses.  However, $L \leq F_{\text{MT}} (k)$; thus Theorem~\ref{var-assignment-thm} ensures that $\Phi$ is satisfiable.

To begin the construction, start with $\Phi_0$ containing no clauses (i.e. $\Phi_0$ is the tautology). At stage $i$ of the process, we modify $\Phi_{i-1}$ to produce a new formula $\Phi_i$ by adding $L - 1$ clauses in which $i$ appears positively and $L - 1$ clauses in which $i$ appears negatively. All other variables in these clauses are completely new, not appearing in any clause of $\Phi_{i-1}$; they all appear positively in the $2L-2$ new clauses, and each of the new variables (other than variable $i$) appears in exactly one new clause. 

Note that $\Pr(B) = p = 2^{-k}$ for all bad-events.  Furthermore, since each variable $i$ has exactly one positive occurence added in some iteration $\Phi_{i'}$ for $i' \neq i$, we have
$$
R_0(\Phi_j, i) \leq L \qquad R_1(\Phi_j, i) \leq L - 1
$$
for all $i,j$.

Define $G_{\ell}$ be the canonical lopsidependency graph corresponding to the bad-events for the formula $\Phi_{\ell}$. Although these graphs are complex, they contain a relatively simple and regular  family of subgraphs $H_j$. We will show that Shearer's criterion is violated for these subgraphs; as shown in \cite{shearer}, this implies that Shearer's criterion is violated for the overall graph $G_{\ell}$.

The graph family $H_j$ will consist of many copies of $K_{L-1, L-1}$, the complete bipartite graph with $L - 1$ vertices on each side. Each graph $H_j$ has a special copy of $K_{L-1, L - 1}$, called the \emph{root} of $H_j$.  We define these graphs recursively. First, $H_0$ is the empty graph. To form $H_{j+1}$, we start by taking a new copy of $K_{L-1, L - 1}$ designated as the root of $H_{j+1}$.  For each vertex $v$ in this root, we add $k-1$ separate new copies of $H_j$, along with an edge connecting $v$ to all the vertices in the right-half of the root of the corresponding $H_j$.

For example, $H_1$ consists of a single copy of $K_{L - 1, L - 1}$.  See Figure~\ref{fig1}.
\begin{figure}[H]
\vspace{1.0in}
\begin{center}
\includegraphics[trim = 0.5cm 21.5cm 6.5cm 5cm,scale=0.5,angle = 0]{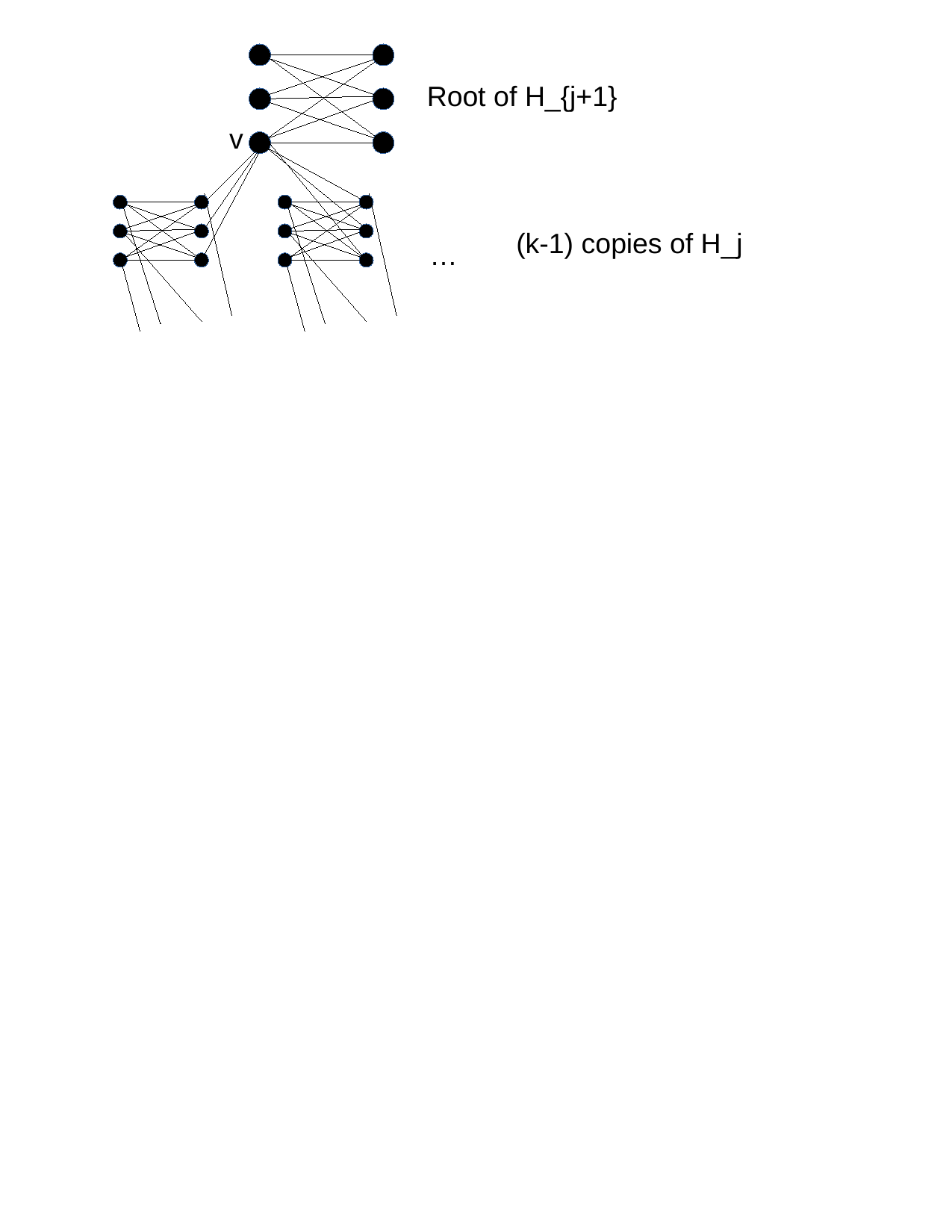}
\vspace{0.2in}
\caption{Construction of $H_{j+1}$ from $H_j$. We have only shown here two copies of $H_j$ corresponding to a single vertex $v$ in the root of $H_{j+1}$. There are $k-1$ copies of $H_j$ for each vertex in the root of $H_{j+1}$ (a total of $2 (L - 1) (k-1)$ copies of $H_j$).
\label{fig1}}
\end{center}
\end{figure}

\vspace{-0.3in}

\begin{proposition}
\label{hjprop}
Any graph $H_j$ appears as a subgraph of $G_{\ell}$ for some $\ell$ sufficiently large.
\end{proposition}
\begin{proof}
Define $A_i$ to be the collection of clauses in $\Phi_i$ but not $\Phi_{i-1}$. We can also define a tree structure $\mathcal T$ on the variables of $\Phi$: variable $i$ is a parent of variable $j$ if variable $j$ appears in $\Phi_i$ but not $\Phi_{i-1}$. For any variable $i$, let $\mathcal T_i$ denote the subtree of $\mathcal T$ rooted at $i$.

For any set of variables $S$, define $G_{\ell}[S]$ to be the subgraph of $G_{\ell}$ induced on the clauses $\phi$ of $\Phi_{\ell}$ such that all variables in $\phi$ come from $S$. Observe that if $S, S'$ are disjoint sets of variables then $G_{\ell}[S], G_{\ell}[S']$ are also vertex-disjoint graphs.

We will prove by induction on $j$ a stronger claim: for any variable $i$, there is some integer $D(i,j)$ sufficiently large such that the induced subgraph $G_{D(i,j)}[ \mathcal T_i ]$ contains a copy of $H_j$, and the root of this copy of $H_j$ corresponds to the clauses of $A_i$.

When $j = 0$ this is vacuously true. For the induction step, consider some variable $i$. Let $C$ denote the $(2 L - 2) (k-1)$ variables which are children of $i$ in $\mathcal T$.  By inductive hypothesis, for each $i' \in C$, the graph $G_{D(i',j-1)}[ \mathcal T_{i'} ]$ contains a copy of $H_{j-1}$ whose root corresponds to $A_{i'}$. 

Let $\ell= i + \max_{i' \in C} D(i', j-1)$; we claim that the choice $D(i,j) = \ell$ satisfies the induction claim. For, in the graph $G_{\ell}[\mathcal  T_i]$, the clauses of $A_i$ in which $i$ appears positively are lopsidependent with those clauses in which $i$ appears negatively. Thus, it has a copy of $K_{L-1,L-1}$ corresponding to $A_i$; we denote this copy by $J$. The graph $G_{\ell}[ \mathcal T_i]$ also contains the disjoint graphs $G_{\ell}[ \mathcal T_{i'} ]$ for each $i' \in C$. For each such $i' \in C$, let $J_{i'}$ denote the corresponding copy of $H_{j-1}$ in $G_{\ell}[ \mathcal T_{i'} ]$.

Consider some clause $\phi \in A_i$, corresponding to a vertex of $J$, and some variable $i' \neq i$ in this clause. The root of $J_{i'}$ corresponds to the clauses $A_{i'}$. Note that $\phi$ is the only clause of $A_i$ in which $i'$ appears, and it appears positively in $\phi$. Variable $i'$ also appears negatively in exactly $L-1$ clauses of $A_{i'}$, which correspond to the right-half of $J_{i'}$. Thus, there are edges from $\phi$ in $J$ to all the right-vertices in $k-1$ copies of $H_{j-1}$. As this is true for every $\phi \in J$, the resulting graph is precisely $H_j$. This completes the induction.
\end{proof}

\section{Computing the Shearer criterion for $H_j$}
We now compute the Shearer criterion for the family of graphs $H_j$. For our intermediate calculations,  we  also  need to work with another closely-related family of graphs. For each $j \geq 0$, define a graph $H'_{j}$ by taking a single vertex $v$ along with $k-1$ new copies of $H_{j}$. We include an edge from $v$ to all the vertices in the right-half of the roots of $H_{j}$. See Figure~\ref{fig2}.

\begin{figure}[ht]
\vspace{0.9in}
\begin{center}
\includegraphics[trim = 0.5cm 21.5cm 6.5cm 5cm,scale=0.5,angle = 0]{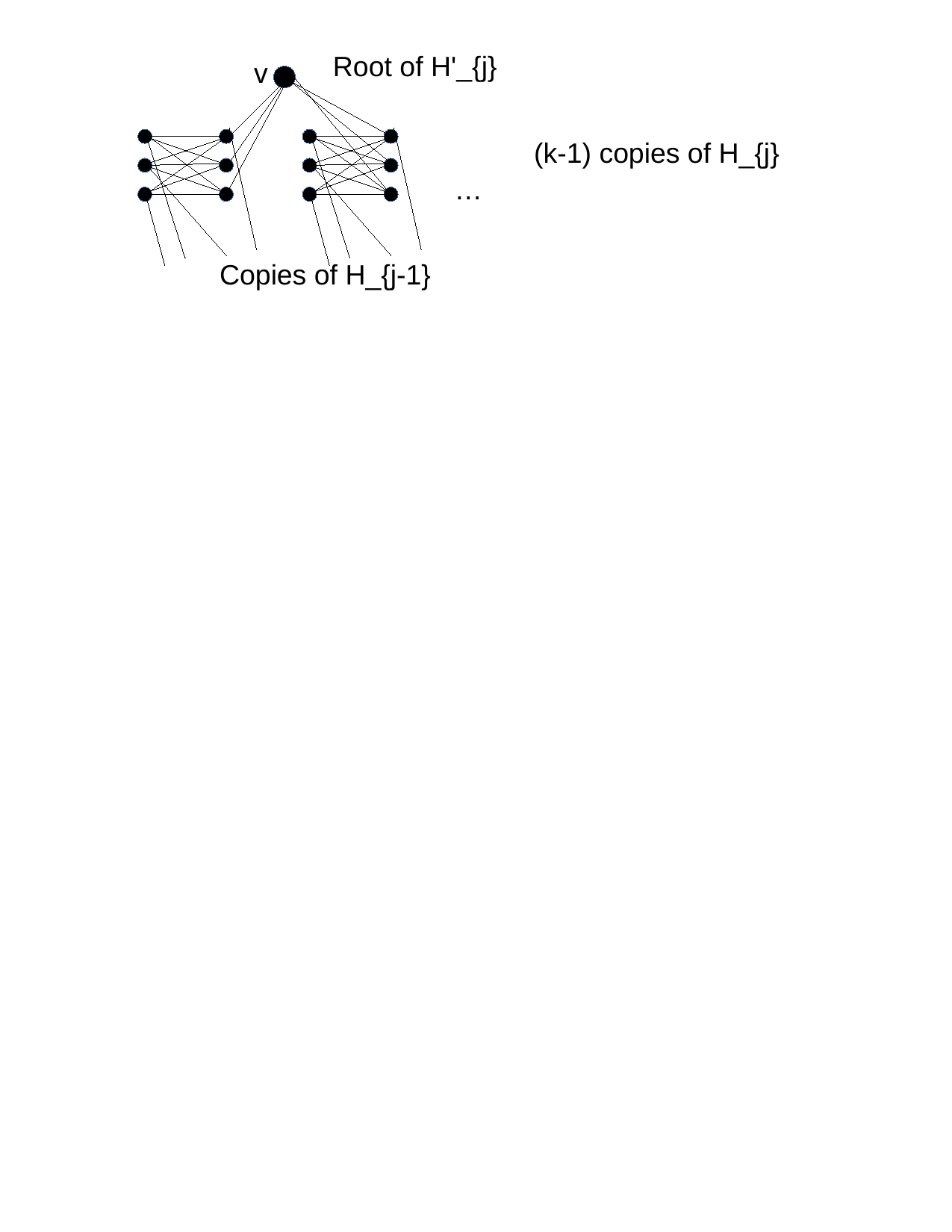}
\caption{The construction of $H'_{j}$ from $H_j$.
\label{fig2}}
\end{center}
\vspace{-0.3in}
\end{figure}

We will make use of two computational tricks for stable set polynomials; the proofs of these are elementary and are omitted here.
\begin{proposition}
\label{shear-prop1}
If vertex set $V$ is partitioned into connected-components as $V = V_1 \sqcup V_2$, then
$$
Q(G, \emptyset, \vec p) = Q(G[V_1], \emptyset, \vec p) Q(G[V_2], \emptyset, \vec p)
$$
\end{proposition}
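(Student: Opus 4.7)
The plan is to prove this directly from the definition of the independent set polynomial by writing both sides out and grouping terms. I will not need any induction or other structural tools; the factorization falls out from the fact that, when $V_1$ and $V_2$ have no edges between them, an independent set in $G$ is the same data as a pair consisting of an independent set in $G[V_1]$ and an independent set in $G[V_2]$.

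First I would unfold the definition with $S = \emptyset$:
$$
Q(G, \emptyset, \vec p) = \sum_{\substack{T \subseteq V \\ T \text{ independent in } G}} (-1)^{|T|} \prod_{i \in T} p_i.
$$
Next I would establish the key bijection. Because no edge of $G$ crosses between $V_1$ and $V_2$, a set $T \subseteq V$ is independent in $G$ if and only if $T_1 := T \cap V_1$ is independent in $G[V_1]$ and $T_2 := T \cap V_2$ is independent in $G[V_2]$. The map $T \mapsto (T_1, T_2)$ is a bijection between independent sets of $G$ and pairs of independent sets of $G[V_1], G[V_2]$, with $|T| = |T_1| + |T_2|$ and $\prod_{i \in T} p_i = \prod_{i \in T_1} p_i \cdot \prod_{i \in T_2} p_i$.

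Substituting this into the sum and separating the product yields
$$
Q(G, \emptyset, \vec p) = \sum_{T_1} \sum_{T_2} (-1)^{|T_1|+|T_2|} \Bigl(\prod_{i \in T_1} p_i\Bigr)\Bigl(\prod_{i \in T_2} p_i\Bigr) = \Bigl(\sum_{T_1} (-1)^{|T_1|} \prod_{i \in T_1} p_i\Bigr)\Bigl(\sum_{T_2} (-1)^{|T_2|} \prod_{i \in T_2} p_i\Bigr),
$$
where the sums on $T_1$ and $T_2$ range over independent sets of $G[V_1]$ and $G[V_2]$ respectively. Each factor is exactly $Q(G[V_1], \emptyset, \vec p)$ and $Q(G[V_2], \emptyset, \vec p)$ by definition, giving the claimed identity.

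There is no real obstacle here; the only subtlety worth being explicit about is why independent sets in $G$ decompose cleanly across the partition, which is immediate because $V_1$ and $V_2$ are unions of connected components (in fact all that is used is that there are no edges between $V_1$ and $V_2$, so the statement would actually hold for any partition with no crossing edges, not merely a decomposition into connected components).
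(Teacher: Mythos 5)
Your proof is correct, and it is the standard elementary argument; the paper explicitly omits the proof of this proposition as elementary, so there is no alternative approach in the paper to compare against. Your observation that only the absence of crossing edges is needed (not a full decomposition into connected components) is accurate and is indeed the natural level of generality for this factorization.
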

\begin{proposition}
\label{shear-prop2}
Suppose $X \subseteq V$. Then
$$
Q(G, \emptyset, \vec p) = \sum_{\substack{\text{stable set } U \subseteq X}} Q(G[V - X - N(U)], \emptyset, \vec p) \prod_{i \in U} (-p_i) 
$$
\end{proposition}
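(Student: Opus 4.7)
The plan is to expand $Q(G,\emptyset,\vec p)$ via its definition and then regroup the sum according to how each independent set of $G$ meets $X$. Specifically, I would begin by writing
$$
Q(G, \emptyset, \vec p) = \sum_{\substack{T \subseteq V\\ T \text{ independent}}} (-1)^{|T|} \prod_{i \in T} p_i,
$$
and, for each independent $T$, set $U := T \cap X$ and $T' := T \setminus U$.

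The main step is to verify that $T \mapsto (U, T')$ is a bijection between independent sets $T \subseteq V$ in $G$ and pairs $(U, T')$ where $U \subseteq X$ is independent in $G$ and $T'$ is an independent set of $G[V - X - N(U)]$. In the forward direction, $U$ is independent because $T$ is, and $T' \subseteq V - X$ by construction; moreover $T' \cap N(U) = \emptyset$, since any edge between $T'$ and $U$ would contradict the independence of $T$. In the reverse direction, given any such pair $(U, T')$, the union $U \cup T'$ is independent in $G$: each half is independent on its own, and the condition $T' \cap N(U) = \emptyset$ rules out any crossing edges.

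Once the bijection is established, I would split the sign and product as
$$
(-1)^{|T|} \prod_{i \in T} p_i \;=\; \Bigl(\prod_{i \in U}(-p_i)\Bigr)\cdot (-1)^{|T'|} \prod_{j \in T'} p_j,
$$
using $|T| = |U| + |T'|$, and then swap the order of summation. Pulling the $U$-dependent factor outside the inner sum and recognizing
$$
\sum_{\substack{T' \subseteq V - X - N(U)\\ T' \text{ independent in } G[V-X-N(U)]}} (-1)^{|T'|} \prod_{j \in T'} p_j \;=\; Q\bigl(G[V - X - N(U)], \emptyset, \vec p\bigr)
$$
delivers the claimed identity.

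There is no genuine obstacle here, as the argument is purely combinatorial bookkeeping. The only point requiring a moment's care is the containment $T' \subseteq V - X - N(U)$ in the forward direction of the bijection, which follows immediately from the independence of $T$ together with the choice $U = T \cap X$.
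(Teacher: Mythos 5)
The paper explicitly omits the proofs of Propositions~\ref{shear-prop1} and~\ref{shear-prop2}, noting only that they are ``elementary,'' so there is no paper proof to compare against. Your argument is correct and is exactly the standard elementary bookkeeping one would expect: the map $T \mapsto (T \cap X,\; T \setminus X)$ is a bijection between independent sets of $G$ and pairs $(U, T')$ with $U \subseteq X$ independent and $T'$ independent in $G[V - X - N(U)]$, and both directions of the bijection (including the containment $T' \subseteq V - X - N(U)$ and, conversely, the independence of $U \cup T'$) are verified cleanly. The sign and product factorization and the interchange of summation order are routine, so the proof is complete.
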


We now begin the calculation.

\begin{proposition}
\label{rec-prop}
Let us define
$$
r_j = Q(H'_j, \emptyset, \vec p) \qquad s_j = Q(H_j, \emptyset, \vec p)
$$
Then $r_0 = 1-p, s_0 = 1$, and $r,s$ satisfy the mutual recurrence relations for $j \geq 1$:
\begin{align*}
r_j & = s_{j}^{(k-1)} -  p r_{j-1}^{(k-1) (L - 1)} s_{j-1}^{(k-1)^2 (L - 1)} \\
s_j &= 2 r_{j-1}^{(L-1)} s_{j-1}^{(k-1) (L - 1)} - s_{j-1}^{(k-1)(2 L-2)}
\end{align*}
\end{proposition}
\begin{proof}
The base cases are clear, since $H_0$ is empty and $H'_0$ is a single node.  We  first show the bound on $s_j$ for $j \geq 1$.  In any stable set $U$ of $H_j$, either $U$ contains zero vertices from the left half of the root of $H_j$, or zero vertices from the right-half of the root of $H_j$, or both. In the first two cases, when we remove the vertices in the left (respectively right) half of $H_j$, then we are left with $L - 1$ copies of $H'_{j-1}$ and $(k-1) (L - 1)$ copies of $H_{j-1}$. In the third case, we are left with $(k-1) (2 L - 2)$ copies of $H_{j-1}$. We can sum the first two contributions and subtract the third, as it is double-counted: this gives
$$
s_j = 2 r_{j-1}^{(L-1)} s_{j-1}^{(k-1) (L - 1)} - s_{j-1}^{(k-1)(2 L-2)}
$$

Next consider the bound for $r_j$. Let $v$ denote the root node of $H'_j$ and let $J_1, \dots, J_{k-1}$ be the copies of $H_j$ to which it is connected, and let $P_i$ denote the root of each $J_i$. We apply Proposition~\ref{shear-prop2} with $X = \{v \}$, and so either $U = \emptyset$ or $U = \{v \}$. For $U = \emptyset$, the graph $H'_j[V - X - N(U)]$ consists of $k-1$ independent copies of $H_{j}$.  For $U = \{v \}$, consider the graph $H'_j[V - X - N(U)]$: the vertices in the left half of $P_i$ now yield $L - 1$ disconnected copies of $H'_{j-1}$ and each vertex $u$ in the right half of $P_i$ now yields $k-1$ disconnected copies of $H_{j-1}$. Over all $k-1$ choices of $i$ and all $(k-1)(L-1)$ choices for $u$ in each $P_i$, we see that $H'_j[V - v - N(v)]$  consists of $(k-1) (L-1)$ copies of $H'_{j-1}$ and $(k-1)^2 (L-1)$ copies of $H_{j-1}$. See Figure~\ref{fig3}.

\begin{figure}[ht]
\vspace{0.8in}
\begin{center}
\includegraphics[trim = 0.5cm 20.5cm 6.5cm 5cm,scale=0.5,angle = 0]{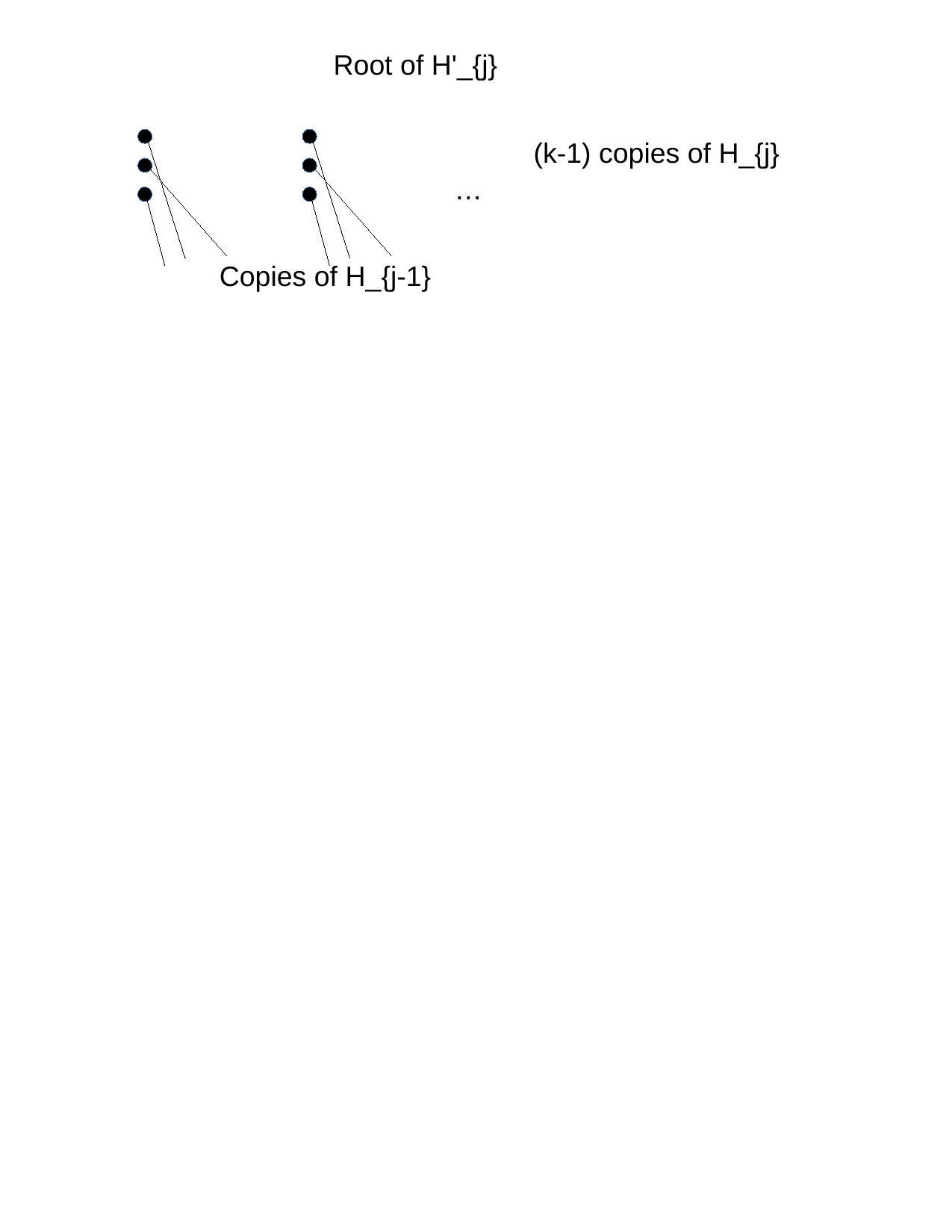}
\caption{Removing the root node from $H'_{j}$}
\label{fig3}
\end{center}
\vspace{-0.2in}
\end{figure}

Summing the contributions of these two terms according to Proposition~\ref{shear-prop2} gives
\[
r_j = Q(H'_j, \emptyset, \vec p) = s_{j}^{(k-1)}-  p r_{j-1}^{(k-1) (L - 1)} s_{j-1}^{(k-1)^2 (L - 1)}. \qedhere
\]
\end{proof}

\begin{proposition}
\label{fixed-point-prop}
Suppose that $G_{\ell}$ satisfies the Shearer condition for all $\ell \geq 0$. Then, if we define the function $g:[0,1] \rightarrow \mathbb R$ by
$$
g(a) = 1 - \frac{p}{(2 - a^{-(L - 1)})^{k-1} } ,
$$
there is some $a  \in (2^{\frac{-2}{2 L-2}},1]$ satisfying $g(a) = a$.
\end{proposition}
\begin{proof}
For $j \geq 0$ define $$
a_j = \frac{r_j}{s_j^{(k-1)}}.
$$
 We will show a recurrence relation for $a_j$. Using Proposition~\ref{rec-prop}, we calculate for $j \geq 1$:
\begin{align*}
a_j &= \frac{s_{j}^{(k-1)}-  p r_{j-1}^{(k-1) (L - 1)} s_{j-1}^{(k-1)^2 (L - 1)}}{s_{j}^{(k-1)}} = 1 - \frac{p r_{j-1}^{(k-1) (L - 1)}}{s_{j-1}^{(k-1)^2 (L - 1)}} \cdot \frac{ s_{j-1}^{(k-1)^2 (2 L-2)}}{s_{j}^{(k-1)}} = 1 - \frac{ p a_{j-1}^{(k-1)(L-1)} }{ \bigl( \frac{s_j}{ s_{j-1}^{(k-1) (2 L-2)}} \bigr)^{k-1} }
\end{align*}

Here again using Proposition~\ref{rec-prop}, we get
\begin{equation}
\label{bjeqn}
\frac{s_{j}}{s_{j-1}^{(k-1)(L-2)} }  = \frac{2 r_{j-1}^{(L-1)} s_{j-1}^{(k-1) (L - 1)} - s_{j-1}^{(k-1)(2 L-2)}}{s_{j-1}^{(k-1)(2 L - 2)}} = \frac{2 r_{j-1}^{(L-1)}}{s_{j-1}^{(k-1)(L - 1)}} - 1 = 2 a_{j-1}^{(L - 1)} - 1
 \end{equation}
 and, substituting this into the equation for $a_j$, this implies:
\begin{equation}
\label{bjeqn2}
a_j = 1 - \frac{p a_{j-1}^{(k-1) (L - 1)}}{ (2 a_{j-1}^{(L - 1)} - 1)^{k-1} } = g(a_{j-1}).
\end{equation}

We must have $a_j > 2^{\frac{-2}{2 L-2}}$ for all $j \geq 1$. For, if not, then Eq.~(\ref{bjeqn}) would otherwise imply that $\frac{s_j}{s_{j-1}^{(k-1)(2 L-2)}} \leq 0$; thus,  either $s_j \leq 0$ or $s_{j-1} \leq 0$. Thus, either $H_{j}$ or $H_{j-1}$ violates the Shearer condition, and so would some $G_{\ell}$; this contradicts our hypothesis.
 
Now suppose $g(a) < a$ for all $a \in  (2^{-\frac{-2}{2 L-2}},1]$, so from Eq.~(\ref{bjeqn2}) the sequence $a_1, a_2, \dots$ decreases monotonically. Because of the lower bound $a_j \geq 2^{\frac{-2}{2 L-2}}$, it converges to some limit point $a \geq 2^{\frac{-2}{2 L-2}}$. By continuity, this must be a fixed point, i.e. $g(a) = a$, as desired. Furthermore, since $g(a)$ diverges to infinity at $a = 2^{\frac{-2}{2 L-2}}$, we must indeed have $a > 2^{-\frac{2}{2 L-2}}$ strictly.

Otherwise, suppose that $g(a) \geq a$ for some $a \in (2^{-\frac{-2}{2 L-2}},1]$. Observe that $g(1) = 1 - p < 1$. Hence, the function $g(a) - a$ changes sign on the interval $(2^{-\frac{-2}{2 L-2}},1]$. This implies there must be a fixed point $g(a) = a$ on this interval.
\end{proof}

\begin{proposition}
\label{prop45}
Suppose 
$$
L > 1-\frac{\ln (2-t)}{\ln \left(1-2^{-k} t^{1-k}\right)}
$$
for all $t \in (2^{k/(k-1)},2)$. Then the Shearer condition is violated on $G_{\ell}$, for $\ell$ sufficiently large.
\end{proposition}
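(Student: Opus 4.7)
The strategy is to assume Shearer's condition holds on every $G_r$, invoke Proposition \ref{fixed-point-prop} to extract a fixed point of the functional iteration $g$, and then use a substitution to rewrite the fixed-point equation as an explicit formula for $L$ in terms of a single auxiliary parameter $t$ in the claimed range, producing a direct contradiction.

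Concretely, I would begin by supposing for contradiction that Shearer's condition is satisfied on $G_r$ for every $r \geq 0$. By Proposition \ref{fixed-point-prop}, this forces the existence of some $a \in (2^{-1/(L-1)}, 1]$ with $g(a) = a$, i.e.,
$$1 - a = \frac{2^{-k}}{(2 - a^{-(L-1)})^{k-1}}.$$
The key move is the substitution $t = 2 - a^{-(L-1)}$. From the range $a \in (2^{-1/(L-1)}, 1]$ we get $a^{-(L-1)} \in [1, 2)$, hence $t \in (0, 1] \subseteq [0, 2]$. Inverting gives $a = (2-t)^{-1/(L-1)}$, and the fixed-point equation rewrites as
$$(2-t)^{-1/(L-1)} = 1 - 2^{-k} t^{1-k}.$$
Taking logarithms of both sides (using that each side lies in $(0,1]$) and solving for $L$ yields
$$L = 1 - \frac{\ln(2-t)}{\ln(1 - 2^{-k} t^{1-k})},$$
which contradicts the hypothesis that $L$ strictly exceeds this expression for every $t \in [0,2]$. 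By Proposition \ref{fixed-point-prop} the contrapositive then gives that Shearer's condition is violated on $G_r$ for $r$ sufficiently large.

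The plan requires essentially no calculation beyond the change of variables; the only items that need a line of justification are (i) that the derived $t$ lies in $(0,1]$, which is immediate from the range on $a$, and (ii) that both logarithms are well-defined with the expected signs, i.e., $2 - t > 0$ (immediate since $t \leq 1$) and $0 < 1 - 2^{-k} t^{1-k} < 1$ (which follows directly from the fixed-point equation, since its left-hand side $(2-t)^{-1/(L-1)}$ is positive and strictly less than $1$ when $t < 1$, while the $t = 1$ case can be handled separately or absorbed into the open interval). The main conceptual obstacle is simply recognizing that the right substitution to unravel the recurrence is $t = 2 - a^{-(L-1)}$ rather than $t = a^{-(L-1)}$; once this is in place, the algebra is routine and the contradiction with the hypothesis is automatic.
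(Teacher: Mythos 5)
Your proof is correct and takes essentially the same approach as the paper: assume Shearer holds for all $G_r$, invoke Proposition~\ref{fixed-point-prop} to get a fixed point $a$ of $g$, reparametrize, and read off the contradicting formula for $L$. The only cosmetic difference is that you substitute $t = 2 - a^{-(L-1)}$ up front, while the paper first solves the fixed-point equation for $L$ in terms of $a$ and then sets $t = 2^{k/(1-k)}(1-a)^{1/(1-k)}$; these two expressions for $t$ coincide under the fixed-point equation, so the argument is the same, and your version of the change of variables is arguably a bit more transparent.
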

\begin{proof}
Suppose not; by Proposition~\ref{fixed-point-prop}, the function $g$ then has a fixed point $a \in (2^{-\frac{-2}{2 L-2}},1]$. So 
$$
a = 1 - \frac{2^{-k}}{(2 - a^{-(L - 1)})^{k-1} } 
$$
Solving for $L$, we thus obtain:
\begin{equation}
\label{ll-eqn}
L = 1-\frac{\ln \left(2-2^{\frac{k}{1-k}}
   (1-a)^{\frac{1}{1-k}}\right)}{\ln a}  \qquad \quad \text{for $t = 2^{k/(1-k)} (1-a)^{1/(1-k)}$}
\end{equation}
where here $t \in (2^{k/(k-1)},2)$. This contradicts our hypothesis.
\end{proof}

For any $k \geq 1$, let us define the quantity $\tilde F_{\text{Shearer}}(k)$ by:
$$
\tilde F_{\text{Shearer}}(k) = \Big \lfloor \max_{t \in (2^{k/(k-1)},2)} 1-\frac{\ln (2-t)}{\ln \left(1-2^{-k} t^{1-k}\right)} \Big \rfloor
$$

In light of Proposition~\ref{prop45}, this is an upper bound on the value of $f'(k)$ that can be shown using the LLL or any variant of it.  We observe that $\tilde F_{\text{Shearer}}(k) \geq F_{\text{LLL}}(k)$ for all values of $k$ --- this must be the case, since the bound $F_{\text{LLL}}$ was indeed derived using the LLL and this is always weaker than Shearer's criterion.  
To illustrate, we list $F_{\text{LLL}}, \tilde F_{\text{Shearer}}$, and $F_{\text{MT}}$ for a few small values of $k$.

\begin{center}
\setlength\extrarowheight{3pt}
\begin{tabular}{|c||c|c|c|}
\hline
$k$ & $F_{\text{LLL}}$ & $\tilde F_{\text{Shearer}}$ & $F_{\text{MT}}$ \\ 
\hline
9 & 20 & 21 & 22 \\
10 & 37 & 38 & 39 \\
11 & 68 & 69 & 71 \\
12 & 125 & 126 & 131 \\
13 & 231 & 233 & 241 \\
14 & 430 & 432 & 446 \\
15 & 803 & 806 & 831 \\
16 & 1506 & 1510 & 1555 \\
17 & 2836 & 2842 & 2922 \\
18 & 5357 & 5366 & 5511 \\
19 & 10151 & 10165 & 10426 \\
20 & 19287 & 19311 & 19784 \\
\hline
\end{tabular}
\end{center}
The gap between $\tilde F_{\text{Shearer}}$ and $F_{\text{LLL}}$ is very small, suggesting that there is little to no improvement possible in the bound for $f'(k)$ from a more advanced more of the LLL.  

We next derive an asymptotic approximation to $\tilde F_{\text{Shearer}}$.

\begin{proposition}
$\tilde F_{\rm{Shearer}} = \frac{2^{k}}{e k} + \Theta(\frac{2^k}{k^3})$
\end{proposition}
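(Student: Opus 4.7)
The plan is to find $\max_{t \in [0,2]} h(t)$ for $h(t) = 1-\frac{\ln(2-t)}{\ln(1-2^{-k}t^{1-k})}$ (so $\tilde F_{\text{Shearer}}(k) = \lfloor \max_t h(t) \rfloor$), and expand it asymptotically in $1/k$ to show the correction to $\frac{2^k}{ek}$ has order $\frac{2^k}{k^3}$ with nonzero coefficient. First I would localize: on $(2^{-k/(k-1)},1)$ the numerator $\ln(2-t)$ is positive and the denominator is negative, so $h(t)>1$; elsewhere $h(t)\le 1$ or is undefined. Setting $\delta = 1-t$, the elementary sandwich $x \le -\ln(1-x) \le x/(1-x)$ applied to the exponentially small $x = 2^{-k}(1-\delta)^{-(k-1)}$ gives
\[
M(\delta) - \ln(1+\delta) \;\le\; h(t)-1 \;\le\; M(\delta), \qquad M(\delta) := 2^k\ln(1+\delta)(1-\delta)^{k-1},
\]
so the sandwich error is at most $\ln 2 = O(1)$, negligible compared to the target correction $\Theta(2^k/k^3)$.

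Second, I would maximize $M(\delta)$ by rescaling $\delta = s/(k-1)$. Using $\ln(1+\delta) = \delta - \delta^2/2 + \delta^3/3 - \cdots$ and $(1-\delta)^{k-1} = e^{-s}\exp(-\tfrac{s^2}{2(k-1)} - \tfrac{s^3}{3(k-1)^2} - \cdots)$ and multiplying out, $M$ takes the form
\[
M = \frac{2^k se^{-s}}{k-1} - \frac{2^k s^2(1+s)e^{-s}}{2(k-1)^2} + \frac{2^k R(s)}{(k-1)^3} + O(2^k/k^4)
\]
for an explicit polynomial-times-$e^{-s}$ function $R$ with $R(1) = \tfrac{3}{8e}$. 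The leading factor $se^{-s}$ is maximized at $s=1$ with value $1/e$; the $1/(k-1)^2$ correction shifts the optimizer to $s^* = 1 - \tfrac{3}{2(k-1)} + O(1/k^2)$ by one Newton step. A second-order Taylor expansion of $M$ at $s=1$ (using the leading second derivative $-1/e$ of $se^{-s}$) contributes an additional $\tfrac{9}{8e(k-1)^3}$, so that
\[
M(\delta^*) = \frac{2^k}{e(k-1)} - \frac{2^k}{e(k-1)^2} + \frac{3\cdot 2^k}{2e(k-1)^3} + O(2^k/k^4).
\]

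Third, I would re-expand each $1/(k-1)^j$ as a Laurent series in $1/k$ and collect. The $1/k$ terms give $\frac{2^k}{ek}$; the $1/k^2$ contributions from the first and second summand are both $\frac{2^k}{ek^2}$ and cancel; and the $1/k^3$ coefficient is $1 - 2 + 3/2 = 1/2$, yielding $M(\delta^*) = \frac{2^k}{ek} + \frac{2^k}{2ek^3} + O(2^k/k^4)$. Combined with the $O(1)$ sandwich of step one and the floor, this gives $\tilde F_{\text{Shearer}}(k) = \frac{2^k}{ek} + \Theta(2^k/k^3)$ with asymptotic coefficient $\frac{1}{2e}$.

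The main obstacle is the tight bookkeeping: the $1/k^2$ terms cancel exactly between the first two summands of $M(\delta^*)$, leaving no slack, so one must compute $M(\delta^*)$ consistently to order $1/(k-1)^3$. This requires carrying both the shift-induced $\tfrac{9}{8e(k-1)^3}$ (from the quadratic term in the Taylor expansion around $s=1$) and the direct $\tfrac{3}{8e(k-1)^3}$ coming from $R(1)$ (which in turn requires the subleading $1/(k-1)^2$ term in the expansion of $(1-\delta)^{k-1}$, including a cross-term from $e^{u^2/2}$); only when both contributions are combined does the total $\tfrac{3}{2}$ emerge, promoting the bound from $O$ to $\Theta$ and pinning down the constant $\tfrac{1}{2e}$.
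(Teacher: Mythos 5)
Your argument is correct and yields a stronger conclusion (the explicit asymptotic coefficient $\tfrac{1}{2e}$) than the paper states; the bookkeeping checks out: the sandwich $M(\delta)-\ln(1+\delta)\le h(t)-1\le M(\delta)$ is tight to $O(1)$, $R(1)=\tfrac{1}{3}-\tfrac{1}{12}+\tfrac{1}{8}=\tfrac{3}{8}$ (so $R(1)e^{-1}=\tfrac{3}{8e}$), the Newton step gives $s^*=1-\tfrac{3}{2(k-1)}+O(1/k^2)$, and $-\tfrac{9}{8}+\tfrac{9}{4}+\tfrac{3}{8}=\tfrac{3}{2}$ followed by the Laurent re-expansion $1-2+\tfrac{3}{2}=\tfrac12$ indeed gives $M(\delta^*)=\tfrac{2^k}{ek}+\tfrac{2^k}{2ek^3}+O(2^k/k^4)$.

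The paper takes a related but distinct path. It starts from the same first step, $-\ln(1-x)\ge x$, to obtain $L\le 1+ t^{k-1}2^k\ln(2-t)$, but then, instead of an asymptotic expansion around the true maximizer, it upper-bounds $\ln(2-t)$ by its tangent line at a reference point $t_0$ (concavity), maximizes the resulting product $t^{k-1}(a-bt)$ in closed form, and only at the very end plugs in $t_0=1-1/k$. For the lower bound, the paper simply evaluates the original expression at $t_0=1-1/k$ and invokes ``simple calculus.'' This avoids locating the optimizer, but the resulting lower and upper bounds have mismatched $2^k/k^3$ coefficients (roughly $\tfrac{3}{8e}$ from below and $\tfrac{1}{2e}$ from above), which is enough for a $\Theta$-bound but does not pin down the constant. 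Your approach — a sandwich whose error is $O(1)$, rescaling $\delta=s/(k-1)$, and an honest second-order perturbation around $s=1$ — proves both directions at once and identifies the sharp constant $\tfrac{1}{2e}$. The trade-off is that you must carry the expansion to order $1/(k-1)^3$ consistently because of the exact cancellation at order $1/k^2$; the paper's concavity trick sidesteps that bookkeeping at the cost of a looser lower bound. One minor point you should add for completeness: justify that the maximizer of $M$ lies in the $\delta=\Theta(1/k)$ window (e.g., because $(1-\delta)^{k-1}$ decays exponentially once $\delta$ is bounded away from $0$), so the rescaled expansion captures the true maximum.
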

\begin{proof}
We can show the lower bound by taking $t = 1 - 1/k$, i.e.
$$
\tilde F_{\text{Shearer}} \geq \Big \lfloor 1-\frac{\ln (2-t)}{\ln (1-2^{-k} t^{1-k} )} \Big \rfloor \geq -\frac{\ln (2-t)}{\ln (1-2^{-k} t^{1-k})} =\frac{2^{k}}{e k} + \Omega(\frac{2^k}{k^3}).
$$

For the lower bound, let $L = \tilde F_{\text{Shearer}}(k)$, so that
$$
L \leq 1-\frac{\ln (2-t)}{\ln \left(1-2^{-k} t^{1-k}\right)}
$$
for some $t \in (2^{k/(k-1)},2)$. Using the bound $-\ln(1-x) \geq x$ for $x \geq 0$, we have:
\begin{equation}
\label{leq}
L \leq 1 + t^{k-1} 2^{k} \ln(2-t) 
\end{equation}

Since $\ln(2-t)$ is a concave-down function of $t$, we have the bound 
$$
\ln(2-t) \leq \ln(2 - t_0) + \frac{t_0 - t}{2 - t_0}
$$
for any chosen value $t_0 \in (0,2)$. Substituting this bound into (\ref{leq}), and differentiating with respect to $t$ to maximize the resulting value, we get
\begin{equation}
\label{ff3}
L \leq 1 + \frac{ \Bigl( 2 (1-1/k) (t_0 + (2 - t_0) \ln
   (2-t_0)) \Bigr)^k}{(2 - t_0) (k-1)}
\end{equation}
If we set $t_0 = 1 - 1/k$ in Eq.~(\ref{ff3}), then straightforward analysis gives:
\[
L \leq \frac{2^{k}}{e k} + O(\frac{2^k}{k^3}) \qedhere
\]
\end{proof}

On the other hand, one can easily verify that $F_{\text{MT}}(k) \geq \frac{2^k}{e k} + \Omega( \frac{2^k}{k^2})$; thus, there is a large and growing gap between $F_{\text{MT}}$ and $\tilde F_{\text{Shearer}}$.

\section{Acknowledgments}
Thanks to Aravind Srinivasan and anonymous journal referees for many helpful comments and suggestions.

\end{document}